\newcommand{\R}{\mathbb{R}}
\newcommand{\C}{\mathbb{C}}
\newcommand{\N}{\mathbb{N}}
\newcommand{\co}{\operatorname{co}}
\newtheorem{defin}{Definition}[section]
\newenvironment{definition}{\begin{defin}\rm}{\end{defin}}
\newtheorem{theorem}[defin]{Theorem}
\newtheorem{proposition}[defin]{Proposition}
\newtheorem{exa}[defin]{Example}
\newenvironment{example}{\begin{exa}\rm}{\end{exa}}
\newtheorem{lemma}[defin]{Lemma}
\newenvironment{proof}
{\noindent{\it Proof.}}{\hfill $\Box$\par\vspace{2.5mm}}
\newenvironment{remark}
{\par\vspace{2.5mm}\noindent{\bf Remark.}}{\par\vspace{2.5mm}}
\renewcommand{\ps@myheadings}{%
\renewcommand{\@evenhead}%
{{\rm\thepage}\hfil{\sc Gundersen, Heittokangas, Ishizaki, Tohge, Wen}\hfil}%
\renewcommand{\@oddhead}%
{\hfil{{\sc Dual exponential polynomials and a problem of Ozawa}\hfil{\rm\thepage}}}%
\renewcommand{\@evenfoot}{}%
\renewcommand{\@oddfoot}{}%
}\makeatother \pagestyle{myheadings}
\numberwithin{equation}{section}
\title{\bf\Large Dual exponential polynomials and\\ a problem of Ozawa}
\author{J.~Heittokangas, K.~Ishizaki, K.~Tohge and Z.-T.~Wen}
\date{}
\begin{document}
\maketitle

\begin{abstract}
Complex linear differential equations with entire coefficients are studied in the situation where
one of the coefficients is an exponential polynomial and dominates the growth of all the other
coefficients. If such an equation has an exponential polynomial solution $f$, then the order of $f$ 
and of the dominant coefficient are equal, and the two functions possess a certain duality property.
The results presented in this paper improve earlier results by some of the present authors, 
and the paper adjoins with two open problems.

\medskip
\noindent
\textbf{Key words:}
Dual exponential polynomials,
exponential sum,  finite order, linear differential equation, Ozawa's problem, value distribution.

\medskip
\noindent
\textbf{2020 MSC:} Primary 30D15; Secondary 30D35.
\end{abstract}

\thispagestyle{empty}

\section{Introduction}

Frei \cite{Frei} has proved that the differential equation
	\begin{equation}\label{Frei-eqn}
	f''+e^{-z}f'+\alpha f=0,\quad \alpha\in\C\setminus\{0\},
	\end{equation}
has a subnormal (that is, non-trivial and finite-order) solution if and only if $\alpha=-m^2$ for a positive
integer $m$. The subnormal solution $f$ is a polynomial in $e^z$ of degree $m$, that is, an exponential sum of the form
	\begin{equation}\label{Frei-f}
	f(z)=1+C_1e^z+\cdots+C_me^{mz}, \quad C_j\in \mathbb{C}.
	\end{equation}
It was discovered in \cite[Lemma~1]{WGH} that in this representation one has $C_j\neq 0$ for $1\leq j\leq m$. Substituting the subnormal solution $f$ into \eqref{Frei-eqn}, we get
	$$
	\sum_{j=1}^mC_j j^2e^{jz}+\sum_{j=1}^mC_j je^{(j-1)z}
	-m^2\sum_{j=1}^mC_j e^{jz}=m^2.
	$$
By the Borel-Nevanlinna theorem \cite[pp.~70, 108]{Gross}, or simply by an elementary observation on three polynomials in $e^z$, this gives rise to the recursive formula
	$$
	C_1=m^2, \quad (m^2-j^2)C_j=(j+1)C_{j+1}, \quad 1\leq j\leq m,
	$$
from which $C_j=\frac{1}{j!}\prod_{k=0}^{j-1}(m^2-k^2)$ for $1\leq j \leq m$.
Due to the presence of the transcendental coefficient $e^{-z}$, any solution of \eqref{Frei-eqn}
linearly independent with $f$ in \eqref{Frei-f} must be of infinite order \cite{GSW}.
For example, when $\alpha=-1$, the function $g(z)=\exp(e^{-z}+z)$ is an infinite order solution of \eqref{Frei-eqn} and linearly independent with $f(z)=1+e^z$.

Ozawa \cite{Ozawa} showed that if $a\neq 0$, then the non-trivial solutions of
	$$
	f''+e^{-z}f'+(az+b)f=0
	$$
are of infinite order of growth. If $P(z)$ is a non-constant polynomial, the question whether all
non-trivial solutions of
	$$
	f''+e^{-z}f'+P(z)f=0
	$$
are of infinite order of growth has been known as the Ozawa problem. This problem has been
answered affirmatively for particular polynomials $P(z)$ by Amemiya-Ozawa \cite{AO} and
by Gundersen \cite{Gary}, while the complete solution is by Langley \cite{Langley}.

We proceed to state three new examples of Frei-Ozawa type.

\begin{example}\label{Ozawa-ex}
If $H$ is an arbitrary entire function, then $f(z)=e^z+1$ solves
	$$
	f''+(H-1+He^{-z})f'-Hf=0.
	$$
Of particular interest is the case when $H$ is a polynomial.
\end{example}

\begin{example}
The function $f(z)=1+(1-3c)\left(e^z+\left(1-\frac34 c\right)\right)e^{2z}$, $c\in\C\setminus\{\frac13,\frac43\}$, with two exponential terms solves
	$$
	f''+\left(-\frac{5}{3}-c+\frac{2}{3}e^{-z}\right)f'+\left(2c-\frac{2}{3}\right)f=0.
	$$
\end{example}

\begin{example}\label{Kazuya-ex}
The function $f(z)=1+3e^{2z}+\sqrt{6}ie^{3z}$ with two exponential terms solves
	$$
	f''+\left(1-\sqrt{6}ie^{-z}+2e^{-2z}\right)f'-12f=0,
	$$
where the transcendental coefficient has two exponential terms also.
By making a change of variable $z\to wz$, where $w\in\C\setminus\{0\}$, we see that
	$$
	g''+w\left(1-\sqrt{6}ie^{-wz}+2e^{-2wz}\right)g'-12w^2g=0
	$$
has a solution $g(z)=1+3e^{2wz}+\sqrt{6}ie^{3wz}=f(wz)$.
\end{example}

One might wonder about possible examples of solutions $f$ with a single exponential term and of  transcendental coefficients $A(z)$ having at least two exponential terms. The non-existence 
of such examples will be confirmed in Theorem~\ref{one-term} below. For example, it will be shown 
that a function $f(z)=1+be^{w z}$ for $b, w \in \mathbb{C}$ is a solution of 
	$$ 
	f''+\left\{P_1(z)+P_2(z)e^{-wz}\right\}f'-P(z)f=0 
	$$ 
for $P(z), P_1(z), P_2(z)\in \mathbb{C}[z]$ if and only if $P_1(z)=\frac{1}{w}P(z)$ and $P_2(z)=\frac{1}{bw}P(z)$. 

In contrast to Ozawa's problem and complementing the three examples above, our primary focus is on exponential 
polynomial solutions of linear differential equations, in particular of second order  equations
	\begin{equation}\label{ldeAB}
	f''+A(z)f'+B(z)f=0,
	\end{equation}
where $A(z)$ and $B(z)$ are entire. An \emph{exponential polynomial} is a function of the form
    \begin{equation}\label{exp.eq}
    f(z)= P_1(z)e^{Q_1(z)} + \cdots + P_k(z)e^{Q_k(z)},
    \end{equation}
where $P_j$, $Q_j$ are polynomials for $1 \leq j \leq k$.
Observe that a polynomial is a special case of an exponential polynomial.
A transcendental exponential polynomial $f$ can be written in the \emph{normalized form}
    \begin{equation}\label{normalized-f}
    f(z)=F_0(z)+F_1(z)e^{w_1z^q}+\cdots+F_m(z)e^{w_mz^q},
    \end{equation}
where $q=\max\{\deg (Q_j)\}\geq 1$ is the order of $f$, the frequencies $w_j$ are non-zero and pairwise distinct, 
the multipliers $F_j$ are exponential polynomials of
order $\leq q-1$ such that $F_j(z)\not\equiv 0$ for $1\leq j\leq m$, and $m\leq k$ \cite{division, Stein1}.

\begin{definition}
(\cite{WGH}) Let $f$ be given in the normalized form \eqref{normalized-f}. If the non-zero
frequencies ${w}_1,\ldots,{w}_m$ of $f$ all lie on a fixed
ray $\arg(w)=\theta$, then $f$ is called a \emph{simple exponential polynomial}.
If $g$ is another simple exponential polynomial of the same order $q$ as $f$ such that
the non-zero frequencies of $g$ all lie on the opposite ray $\arg(w)=\theta+\pi$, then $f$ and $g$ are called \emph{dual exponential polynomials}.
\end{definition}

For example, the functions $f(z)=z^2e^{-iz}+ze^{z^2}+e^{2z^2+(1-i)z}$ and $g(z)=2e^{-z^2+(1+i)z}+z^2e^{-4z^2+iz}$ are dual exponential polynomials of order 2. 

In studying the differential equation \eqref{ldeAB} with entire coefficients $A(z)$ and $B(z)$, it is 
fundamental that each of its solutions $f$ is an entire function also. In this paper we study cases
when $f$ can be an exponential polynomial assuming that $A(z)$ is an exponential polynomial and that $B(z)$ grows
slowly compared to $A(z)$. Naturally, the set ${\mathcal E}$ of entire functions is a ring closed under differentiation, and the set $\operatorname{Exp}_q$ of exponential polynomials of order $\leq q\in\mathbb{N}$ together with constants in $\C$ and ordinary polynomials in $\mathbb{C}[z]=:\operatorname{Exp}_0$ becomes a differential subring of $\mathcal{E}$. On the other hand, $\operatorname{Exp}_q$ is not closed under integration in general except for the set $\operatorname{Exp}_1$ of exponential polynomials of order~$\leq 1$, which plays a role in our discussions. 

To identify a primitive of each element in $\operatorname{Exp}_1$, it is convenient to use the formula 
	$$ 
	\int z^n e^{w z}\, dz =\left(\frac{1}{w }z^n + 
	\sum_{\nu=0}^{n-1}\frac{(-1)^{n-\nu}n!}{w^{n-\nu+1}\nu!}z^\nu\right) e^{w z}+\text{constant}
	$$
for $n\in\mathbb{N}\cup\{0\}$ and $w\in\mathbb{C}\setminus\{0\}$. 
Of course, an analogous formula is not in general available for $z^n e^{w z^q}$ when $q\geq 2$. 
Indeed, recall the {\it error function} $\mathrm{erf}(z)$ defined also for complex argument $z$ by
	$$
	\mathrm{erf}(z) = \frac{2}{\sqrt{\pi}} \int_0^{z} e^{-\zeta^2}\,d\zeta. 
	$$ 
It is the primitive of $\frac{2}{\sqrt{\pi}}e^{-z^2}\in \operatorname{Exp}_2$, but the function itself 
is not an exponential polynomial. For this reason one needs the special expression $\mathrm{erf}(z)$ for this function as in the real argument case. This is also the case when $q\geq 3$. The value distribution
of the functions $\int_0^z e^{-\zeta^q}d\zeta$, $q\in\mathbb{N}$, as described in Nevanlinna's 
monograph \cite[pp.~168--170]{Nevanlinna}, is quite different from that of exponential polynomials
\cite{division, WH, Stein1}.

Along with $\operatorname{Exp}_{q-1}$, the set $\mathcal{S}_q(\theta)$ of simple exponential 
polynomials of order~$q$ with respect to a fixed angle $\theta\in[0,2\pi)$ forms a differential subring of $\operatorname{Exp}_q$. A unit element in $\mathcal{S}_q(\theta)$ is a single exponential term 
$e^{wz^q+p(z)}$ with $\arg(w)=\theta$, $p(z)\in \mathbb{C}[z]$ and $\deg(p)\leq q-1$, whose multiplicative inverse belongs to the set $\mathcal{S}_q(\theta+\pi)$ as its dual exponential polynomial. 
It should be observed that if $f\in \mathcal{S}_q(\theta)$ and $g\in \mathcal{S}_q(\theta+\pi)$ are
dual exponential polynomials, then $fg\in \operatorname{Exp}_{q-1}$ might not hold, but even so, the
growth of $fg$ in terms of the characteristic function could be somewhat reduced from that of $f$ or $g$.

\begin{example}
If $f(z)= e^z + e^{2z}$ and $g(z) = e^{-4z}$, then $T(r,f)=\frac{2}{\pi}r+O(\log r)$ and 
$T(r,g)=\frac{4}{\pi}r+O(\log r)$, while $T(r,fg)=\frac{3}{\pi}r+O(\log r)$, see \cite{WH}. Alternatively, the choice 
$g(z)=e^{-z}$ gives $T(r,g)=\frac{1}{\pi}r$ and $T(r,fg)=\frac{1}{\pi}r+O(1)$.
\end{example}

In our setting, the duality of two exponential polynomials is an interdependence among them in order to 
reduce the growth under multiplication, especially when combined with differentiation. For example, if 
$A$ and $f$ are dual exponential polynomials 
of order $q$, then $A$ and $f'$ are also dual, and at times the growth of $Af'$ is
reduced to $\rho(Af')<q$. 

The motivation for studying exponential polynomial solutions of \eqref{ldeAB} arises from the following previous result.

\begin{theorem}\textnormal{(\cite{WGH})}\label{second.theorem}
Suppose that $f$ is a transcendental exponential polynomial solution of \eqref{ldeAB},	
where $A(z)$ and $B(z)$ are exponential polynomials satisfying $\rho(B)<\rho(A)$. Then the following assertions hold.
\begin{itemize}
\item[\textnormal{(a)}] 
$f$ and $A(z)$ are dual exponential polynomials of order $q\in\N$, and $f$ has the 
normalized  representation
	\begin{equation}\label{simple-f}
	f(z)=c+F_1(z)e^{w_1z^q}+\cdots +F_m(z)e^{w_mz^q},
	\end{equation}
where $m\in\N$ and $c\in\C\setminus\{0\}$.
\item[\textnormal{(b)}]
If $\rho(Af')<q$, then $q=1$ and
    \begin{equation}\label{q=1}
   A(z)=a e^{-wz}, \quad B(z)=-w^2 \quad\text{and} \quad f(z)=c\left(1+\frac{w}{a}e^{wz}\right),
    \end{equation}
where $w=w_1$ and $a \in\C\setminus\{0\}$.
\end{itemize}
\end{theorem}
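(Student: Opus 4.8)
The plan is to analyse the identity $f''+Af'+Bf=0$ through its order-$q$ frequencies together with some Nevanlinna bookkeeping, writing $f$ in normalized form $f=F_0+\sum_{j=1}^m F_je^{w_jz^q}$ with $q=\rho(f)$, distinct nonzero $w_j$, and $F_j\in\operatorname{Exp}_{q-1}$. First I would record the easy bound $\rho(A)\le q$: solving $A=-(f''+Bf)/f'$, the first main theorem gives $T(r,1/f')=T(r,f')+O(1)=O(r^q)$, and combined with $T(r,f''),T(r,f)=O(r^q)$ and $T(r,B)=O(r^{\rho(B)})$ the subadditivity of $T$ yields $T(r,A)=O(r^{\max\{q,\rho(B)\}})$, whence $\rho(A)\le q$ since $\rho(B)<\rho(A)$. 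The reverse bound $\rho(A)\ge q$ I would obtain from the structure below, once $A$ is shown to carry the nonzero frequencies $-w_j$; alternatively, if $\rho(A)<q$ then matching the dominant order-$q$ growth across the equation forces either $\rho(B)=\rho(A)$ (excluded) or $B\equiv0$, in which case $f$ would be a primitive of $e^{-\int A}$ and not an exponential polynomial when $q\ge2$.

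Next I would extract the structure. Splitting each term into its order-$q$ exponential parts, both $f''$ and $Bf$ carry only the frequencies in $W_f=\{w_1,\dots,w_m\}$ (with $0$ coming from $F_0$), whereas $Af'$ carries the frequencies of the sumset $W_A+(W_f\cup\{0\})$, those summing to $0$ dropping below order $q$. Passing to indicator functions, equivalently to the convex hulls $\co(W_f)$, $\co(W_A)$ and their Minkowski sum, a vertex of the convex hull of the total frequency set can be cancelled only by an identical frequency; testing this at the extreme frequencies forces the frequencies of $A$ to be negatives of those of $f$ and, crucially, forces the $w_j$ to be collinear with the origin. Thus $f\in\mathcal{S}_q(\theta)$ and $A\in\mathcal{S}_q(\theta+\pi)$ are dual of order $q$ (in particular $\rho(A)=q$). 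I expect this collinearity step to be the main obstacle when $m\ge2$: one must rule out accidental coincidences in $W_A+W_f$, and this is where the Steinmetz-type asymptotic $T(r,g)=(1+o(1))\,C\,r^q$, with $C$ proportional to the perimeter of $\co(W_g\cup\{0\})$, keeps the counting honest.

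To finish (a) I would pin down $F_0$. Taking the coefficient of $e^{-w_{j_0}z^q}$, where $w_{j_0}$ has the largest modulus among the $w_j$, neither $f''$ nor $Bf$ contributes (they carry no frequency on the ray $\arg(w)=\theta+\pi$), and inside $Af'$ this frequency can arise only from the term $a_{j_0}e^{-w_{j_0}z^q}$ of $A$ times the frequency-$0$ part $F_0'$ of $f'$ (pairing with any positive frequency of $f'$ would need a frequency of $A$ of modulus exceeding $|w_{j_0}|$, which does not exist). Hence $a_{j_0}F_0'\equiv0$, and since $a_{j_0}\ne0$ this gives $F_0'\equiv0$, so $F_0=c$ is constant. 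Finally $c\ne0$: if $c=0$ and $m\ge2$ the most negative frequency of $Af'$ survives with nonzero coefficient and cannot be cancelled; if $c=0$ and $m=1$ then $f=F_1e^{w_1z^q}$ and a growth analysis of $A=-(f''+Bf)/f'$ forces $\rho(A)\le\rho(B)$ — both contradicting $\rho(B)<\rho(A)$. This yields the representation in (a).

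For (b), assume $\rho(Af')<q$. By (a) the order-$q$ frequencies of $A$ and of $f'$ lie on opposite rays, so on the line $\arg(w)=\theta$ the order-$q$ support of $Af'$ is the set of differences of the two supports; its extreme elements occur uniquely and with nonzero coefficients, so $\rho(Af')<q$ forces each factor to reduce to a single exponential term. Hence $m=1$ and, with $w=w_1$, $f=c+F_1e^{wz^q}$ and $A=\alpha e^{-wz^q}$ with $\alpha\in\operatorname{Exp}_{q-1}$. Substituting and separating the frequency-$w$ part (order $q$) from the lower-order part gives $B=-(g'+qwz^{q-1}g)/F_1$ and $\alpha=\frac{c}{F_1}\bigl(g'/g+qwz^{q-1}\bigr)$, where $g:=e^{-wz^q}f'=F_1'+qwz^{q-1}F_1$. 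Since $A$, hence $\alpha=Ae^{wz^q}$, is entire while $g'/g$ would contribute a nonzero residue at each zero of $g$, the function $g$ must be zero-free, so $g=e^{p}$ for a polynomial $p$ with $\deg p\le q-1$. Comparing this with the explicit form $g=F_1'+qwz^{q-1}F_1$ forces, by a degree count, $q=1$ and $F_1$ constant. With $q=1$ the two relations give $B=-w^2$, $\alpha=a:=wc/F_1$, and $F_1=wc/a$, that is $A=ae^{-wz}$ and $f=c\bigl(1+\frac{w}{a}e^{wz}\bigr)$, as claimed. The main difficulty in (b) is exactly this uniform exclusion of $q\ge2$, for which the zero-freeness of $g$ is the decisive mechanism.
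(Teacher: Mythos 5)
The decisive gap is in part (a), at the step $\rho(A)\ge q$. Your Nevanlinna estimate correctly gives $\rho(A)\le\max\{q,\rho(B)\}$, hence $\rho(A)\le q$; but the reverse inequality cannot be obtained ``from the structure below'', because the sumset/convex-hull bookkeeping is vacuous when $A(z)$ carries no order-$q$ frequencies: in that case $Af'$ has exactly the frequencies of $f'$, the equation splits by Borel into the component identities $H_j+AG_j+BF_j=0$ and $F_0''+AF_0'+BF_0=0$ (with $G_j=F_j'+qw_jz^{q-1}F_j$, $H_j=G_j'+qw_jz^{q-1}G_j$), and these are perfectly consistent at the level of orders and frequencies, so no contradiction is visible. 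Your alternative claim that $\rho(A)<q$ ``forces $\rho(B)=\rho(A)$ or $B\equiv 0$'' comes with no argument, and the natural estimate does not give it: from $A=-(H_j+BF_j)/G_j$ one only gets $\rho(A)\le\max\{q-1,\rho(B)\}$, so the case $\rho(A)\le q-1$ survives. The same defect occurs in your treatment of $c=0$, $m=1$, where again the bound is $\rho(A)\le\max\{q-1,\rho(B)\}$, not $\rho(A)\le\rho(B)$. This exclusion is precisely the nontrivial input that the paper does not treat as routine: in its strengthened Theorem~\ref{dual-thm} the cases $\rho(f)\ge\rho(A)+1$, and $\rho(f)=\rho(A)$ with $F_0\equiv 0$ or $F_0'\not\equiv 0$, are ruled out by invoking the proof of \cite[Theorem~3.6]{GOP}, an indicator/completely-regular-growth argument. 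Nothing of comparable strength appears in your proposal, so part (a) is incomplete.

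A second, repairable, error: the assertion that duality ``forces the frequencies of $A$ to be negatives of those of $f$'' is false. In Frei's equation \eqref{Frei-eqn} one has $A(z)=e^{-z}$ while $f$ in \eqref{Frei-f} has frequencies $1,\dots,m$ with all $C_j\neq 0$, so $W_A\neq -W_f$ whenever $m\ge 2$; what is true is only the extremal relation $\lambda_k=w_1$ recorded in \eqref{ordered-leading-coefficients}. Consequently your argument for $F_0'\equiv 0$ must be run at the extreme frequency $-\lambda_k$ of $A$ (whose pairing with $F_0'$ produces the unique most negative frequency $-\lambda_k$ in the equation), not at $-w_{j_0}$ with $|w_{j_0}|$ maximal among the $w_j$, since $A$ need not have the frequency $-w_{j_0}$ at all. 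With that fix, your frequency arguments for $F_0'\equiv 0$ and for $c\neq 0$ do work once $\rho(A)=q$ is known, and they give a pleasantly elementary substitute for the paper's appeal to \cite{GOP} in that sub-case; but they do not touch the missing case $\rho(A)<q$.

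Part (b) is essentially sound and coincides with the method the paper uses for its own Theorem~\ref{one-term}: the extreme-frequency argument forcing $A_0\equiv 0$ and $m=k=1$ with $\lambda_1=w_1$, the pole argument showing $G_1=e^{-wz^q}f'$ is zero-free, hence $G_1=e^{p}$ with $\deg p\le q-1$, and then the exclusion of $q\ge 2$. Note only that your ``degree count'' is compressed: one must still show that $F_1'+qwz^{q-1}F_1=e^{p}$ admits no exponential polynomial solution $F_1$ of order $\le q-1$ when $q\ge 2$ (reduce $F_1$ by a Borel argument to the form $R\,e^{s}$ with $R,s$ polynomials, then compare degrees); this is the same point the paper settles through the primitive in \eqref{F1}.
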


If $a=c=w=1$, then \eqref{q=1} reduces to Frei's equation \eqref{Frei-eqn} and Frei's solution
\eqref{Frei-f} in the case $m=1$. 
The following example illustrates that it is not always the case that the differential equation \eqref{ldeAB} 
possesses a non-trivial exponential polynomial solution when $A(z)$ and $B(z)$ are exponential polynomials 
satisfying $\rho(B)<\rho(A)$. 

\begin{example}\label{infiniteorder}
For a fixed $n\in\mathbb{Z}$, let $A(z)=-\frac{5}{3}+n+\frac{2}{3}e^{-z}$ and $B(z)=-\frac{8}{3}+n$.
Then \eqref{ldeAB} has a zero-free solution 
	\begin{equation}\label{f1}
	f(z)=\exp\left\{\dfrac{2}{3}e^{-z}+\left(\dfrac{8}{3}-n\right)z\right\}. 
	\end{equation}
Note that $f$ is an exponential of an exponential polynomial. Another solution of \eqref{ldeAB},
linearly independent with $f$, is
	\begin{eqnarray*}
	g(z)&=& f(z)\int^z \frac{e^{-\zeta}}{f(\zeta)}d\zeta \\
	&=& \exp\left\{\frac{2}{3}e^{-z}+\left(\frac{8}{3}-n\right)z\right\} 
	\int^z \exp\left\{\frac{2}{3}e^{-\zeta}+\left(\frac{5}{3}-n\right)\zeta\right\} d\zeta, 
	\end{eqnarray*}
where the integral represents an arbitrary primitive function. We may re-write this as
	$$
	g(z)\exp\left\{-\frac{2}{3}e^{-z}-\left(\frac{8}{3}-n\right)z\right\}
	=\int^z \exp\left\{\frac{2}{3}e^{-\zeta}+\left(\frac{5}{3}-n\right)\zeta	\right\}d\zeta
	$$
to see that $g$ solves a first order equation 
	$$
	g'(z)+\left\{-\frac{2}{3}e^{-z}+\left(\frac{8}{3}-n\right)\right\}g(z)
	=\exp\left\{\frac{4}{3}e^{-z}+\left(\frac{13}{3}-2n\right)z\right\}.  
	$$
This shows that $g$ cannot be any exponential polynomial as a function of infinite order. Hence it is necessary in Theorem~\ref{second.theorem} to assume 
that \eqref{ldeAB} has a nontrivial exceptional polynomial solution $f$. 
\end{example}

One may also observe that a small perturbation in the above coefficients $A(z)$ and $B(z)$ brings our desired case. In fact, by choosing $A(z)=-\frac{5}{3}-n+\frac{2}{3}e^{-z}$ and $B(z)=-\frac{2}{3}+2n$ for any $n\in\mathbb{Z}$, the 
equation \eqref{ldeAB} permits the  exponential polynomial solution 
	\begin{equation}\label{f2}
	f(z)=1+(1-3n)e^z+(1-3n)\left(1-\dfrac{3}{4}n\right)e^{2z}.
	\end{equation}
 A difference between these two cases can also be observed in the logarithmic derivatives: 
If $f$ is the function in \eqref{f1}, then $\frac{f'(z)}{f(z)}=-\frac{2}{3}e^{-z}+\frac{8}{3}-n$, while if $f$
is the function in \eqref{f2}, then $\frac{f'(z)}{f(z)}$ is not an exponential polynomial but an irreducible 
rational  function in $e^z$.

After discussing some properties of exponential polynomials in
Section~\ref{preliminaries.sec}, we will show in Section~\ref{main-sec} that the conclusions in 
Theorem~\ref{second.theorem}(a) can be made stronger under weaker assumptions. Complementing the condition
$\rho(Af')<q$ in Theorem~\ref{second.theorem}(b), some new conditions implying the conclusion $q=1$ will be discovered.
Examples on higher order duality as well as on the cases where a solution is dual to more than one
coefficient will be discussed in Sections~\ref{higher-order} and \ref{multiple-duality}, respectively.
Two open problems are formulated in the hope that these findings would give raise to further discussions
in the future.


\section{Preliminaries on exponential polynomials}\label{preliminaries.sec}


We need to introduce several concepts some of which are new.

\begin{definition}\label{commensurable}
(\cite[p.~214]{Langer}) Let $f$ in \eqref{normalized-f} be a simple exponential polynomial.
If there exists a constant $w\in\C\setminus\{0\}$ such that $w_j/w$ is a positive integer for every
$j=1,2,\ldots,m$, then the (non-zero) frequencies of $f$ are said to be \emph{commensurable},
and $w$ is called a \emph{common factor}.
\end{definition}

For example, $f(z)=e^{\pi z}+3e^{2\pi z}+ze^{3\pi z}$ and $g(z)=e^{4iz}+e^{6iz}$ are simple exponential polynomials, both of their frequencies are commensurable, and examples for common factors are $\pi,\pi/2$ for $f$ and $i,2i$ for $g$. In particular, a common factor is not unique.

Note that it is usual to say that non-zero real numbers $a$ and $b$ are commensurable if their ratio $a/b$ is a rational number. Equivalently, there exist a real number $c$ and integers $m$ and $n$ such that $a=mc$ and $b=nc$. 
In Definition~\ref{commensurable} we are concerned with a simple exponential polynomial and a fixed $\theta\in[0, 2\pi)$, and thus all the non-zero frequencies are of the form $w_j=r_je^{i\theta}$ for $r_j>0$, and the ratio of $w_j$ and $w_i$ is 
	$$
	\frac{w_j}{w_i}=\frac{r_j}{r_i}=\frac{w_j/w}{w_i/w}.
	$$ 
This is a positive rational number for a common factor $w$. If we consider the dual exponential polynomial as well, 
all those frequencies are commensurable in the usual sense. 

If $f$ is a simple exponential polynomial of order one with constant multipliers, and if its
frequencies are commensurable as in Frei's case (\ref{Frei-f}), then by the fundamental theorem of algebra, $f$ can be written as
    $$
    f(z)=A\prod_{j=1}^m\left(e^{wz}-\alpha_j\right),
    $$
where $A\neq 0$, $\alpha_j$'s are complex constants and $m$ is a positive integer. In particular, all the zeros of $f$ lie on at most $m$ lines.

We note that if the non-zero frequencies of $f$ are commensurable, then they are clearly linearly dependent
over rationals (see \cite{Moreno} for results in this direction), but not the other way around.
For example, the points $w_1=1, w_2=\sqrt{2}, w_3=\sqrt{2}-1$ are linearly dependent over rationals but
not commensurable.

\begin{definition}\label{s-commensurable}
Suppose that $f$ and $g$ are dual exponential polynomials with commensurable frequencies
$\{w_j\}$ $(j>0)$ and $\{\lambda_i\}$ $(i>0)$, respectively, sharing the same common factor $w$ but with opposite signs.  If the points $w_j+\lambda_i$ 
are on one ray including the origin for all $i,j>0$, then
$f$ and $g$ are called \emph{strongly dual exponential polynomials}.
\end{definition}

For example, the functions $f(z)=1+ze^z+2e^{3z}$ and $g(z)=1-e^{-z}$ are strongly dual exponential polynomials,
while $f(z)$ and $h(z)=g(z)+2z^2e^{-2z}$ are not.
Note that if $\arg(w_j)=\theta$, then $\arg(\lambda_j)=\theta+\pi$ by duality, and moreover, if $w_j+\lambda_i\neq 0$,
then precisely one of $\arg(w_j+\lambda_i)=\theta$ or $\arg(w_j+\lambda_i)=\theta+\pi$ holds for all $i,j>0$.
Alternatively, strong duality of $f$ and $g$ of order $q$ can be expressed as follows: There exists a 
non-zero constant $w$  such that 
	\begin{equation}\label{fg}
	f(z) = \sum_{j = 0}^m F_j(z)(e^{wz^q})^j\quad\textnormal{and}\quad  g(z) = \sum_{i = 0}^m G_i(z)(e^{-wz^q})^i,
	\end{equation}
where $F_j, G_i$ are exponential polynomials of order $\leq q-1$. Hence $f$ is a polynomial in $e^{wz^q}$ and 
$g$ is a polynomial in $e^{-wz^q}$, with smaller exponential polynomials as multipliers. Using the notation above,  
$f\in \operatorname{Exp}_{q-1}[e^{wz^q}]$ and $g\in \operatorname{Exp}_{q-1}[e^{-wz^q}]$. 
Differing from the situation in \eqref{normalized-f}, some of the multipliers $F_j, G_i$ $(i, j>0)$ in \eqref{fg} must suitably vanish identically so that only one of $j-i\geq 0$ or $j-i\leq 0$ always holds for all non-vanishing multipliers $F_j, G_i$ $(i, j>0)$. 
This is a consequence of Definition~\ref{s-commensurable}.  

We may think that being strong in our duality means that the product of $f(z)-F_0(z)$ and $g(z)-G_0(z)$ becomes again a commensurable exponential polynomial with either $w$ or $-w$ as a common factor. In the case when both $F_0(z)$ and $G_0(z)$ are constant, each product of the derivatives $f^{(k)}(z)$ and $g^{(\ell)}(z)$, $k, \ell\in\mathbb{N}$, is a commensurable exponential polynomial with the same common factor as the product of $f(z)-F_0(z)$ and $g(z)-G_0(z)$.

\begin{definition}
(\cite{WGH}) Denote the set of complex conjugate frequencies of the function $f$ in \eqref{normalized-f} by
$W_f=\{\overline{w}_0,\overline{w}_1,\ldots,\overline{w}_m\}$, where
$\overline{w}_0=0$ is related to the multiplier $F_0(z)\not\equiv 0$, and $W_f=\{\overline{w}_1,\ldots,\overline{w}_m\}$ when $F_0(z)\equiv 0$. Denote the convex hull of the set $W_f$ by $\co (W_f)$, and  let $C(\co(W_f))$ denote the circumference of $\co(W_f)$.
\end{definition}

The set $\co(W_f)$ is defined as the intersection of all closed convex sets containing $W_f$, and as
such it is either a convex polygon or a line segment. 
The latter occurs when $f$ is simple, and, in particular, when $w_1, \ldots, w_m$ are commensurable. 
The vertices of $\co (W_f)$ are formed by some (possibly all) of the points
$\overline{w}_0,\overline{w}_1,\ldots,\overline{w}_m$. The circumference
$C(\co(W_f))$ of $\co(W_f)$ plays an important role in
describing the value distribution of $f$, see \cite{division, WH, Stein1}.

Let $h$ be a quotient of two transcendental exponential polynomials, say
	$$
	h(z)=f(z)/g(z),
	$$
where $f$ is of the form \eqref{normalized-f} and $g$ is an exponential polynomial
of the normalized form
	$$
	g(z)=G_0(z)+G_1(z)e^{w_1z^q}+\cdots+G_m(z)e^{w_mz^q}.
	$$
In these representations of $f$ and $g$ for the quotient $h$, we allow that
some of the multipliers $F_j$ or $G_j$ may vanish identically, but we suppose that
the matching multipliers $F_j$ and $G_j$ do not both vanish
identically for any $j$.

For the quotient $h=f/g$, define the set $W_h=\{\overline{w}_0,\overline{w}_1,\ldots,\overline{w}_m\}$. 
The proximity function of $h$ is
	\begin{equation}\label{m.eq}
	m(r,h)=\big(C(\co(W_h))-C(\co(W_g))\big)\frac{r^q}{2\pi}+o(r^q),
	\end{equation}
see \cite[Satz 1]{Stein2}. In particular, if $g\equiv 1$, then $W_g=\{0\}$ and $C(\co(W_g))=0$.
This yields \cite[Satz 1]{Stein1} as a special case, namely
	\begin{equation}\label{T}
	T(r,f)=m(r,f)=C(\co(W^0_f))\frac{r^q}{2\pi}+o(r^q),
	\end{equation}
where $W^0_f=W_f\cup\{0\}$. The estimates \eqref{m.eq} and \eqref{T} are consistent with the estimate
	$$
	m\left(r,\frac{f'}{f}\right)=o(T(r,f)),
	$$
known as the lemma on the logarithmic derivative, since $W_{f'/f}=W_f$ holds for any given
exponential polynomial $f$ of the form \eqref{normalized-f}. We also point out that $W_{f/f'}=W_{f'}$.
This fact will be used in proving our main results in Section~\ref{main-sec}.

%
\section{The main results}\label{main-sec}
%

Motivated by Example~\ref{Kazuya-ex}, we improve Theorem~\ref{second.theorem}(a)
under weaker assumptions~on~$B(z)$.

\begin{theorem}\label{dual-thm}
Suppose that $f$ and $A(z)$ in \eqref{ldeAB} are transcendental exponential polynomials,
and that $B(z)$ is an entire function satisfying $T(r,B)=o(T(r,A))$. Then the following assertions~hold.
\begin{itemize}
\item[\textnormal{(a)}] 
$f$ and $A(z)$ are dual exponential polynomials of order $q\in\N$, $f$ has the normalized representation
\eqref{simple-f}, and $B(z)$ is an exponential polynomial of order
$\rho(B)\leq q-1$.
\item[\textnormal{(b)}]
The frequencies of $f$ are commensurable if and only
if the frequencies of $A(z)$ are commensurable. In both cases, $f$ and $A(z)$ are
strongly dual exponential \mbox{polynomials.}
\end{itemize}
\end{theorem}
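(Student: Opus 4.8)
The plan is to recover the hypotheses of Theorem~\ref{second.theorem}(a) and quote it for part~(a), and then to prove the genuinely new part~(b) by a direct frequency-cancellation analysis of the identity $f''+Af'+Bf=0$. For part~(a), rewrite the equation as $Bf=-(f''+Af')$. Since $\operatorname{Exp}_q$ is a differential ring, the right-hand side is an exponential polynomial, so $f$ divides the exponential polynomial $f''+Af'$ with entire quotient $-B$. Moreover every zero of $f$ is simple: if $f(z_0)=f'(z_0)=0$, then the equation forces $f''(z_0)=0$ and, by repeated differentiation, $f\equiv0$, contradicting transcendence. Hence the (simple) zeros of the transcendental exponential polynomial $f$ are all cancelled in the quotient, and the division theory of exponential polynomials (\cite{division}) yields that $B$ itself is an exponential polynomial. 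I would then compare growth: from $B=-f''/f-A\,f'/f$ and the lemma on the logarithmic derivative ($m(r,f'/f)=O(\log r)$, $m(r,f''/f)=O(\log r)$) one gets $T(r,B)\le T(r,A)+O(\log r)$, so $\rho(B)\le\rho(A)$; and since $A,B$ are exponential polynomials, \eqref{T} gives $T(r,A)\sim c_Ar^{\rho(A)}$ with $c_A>0$, so $T(r,B)=o(T(r,A))$ is incompatible with $\rho(B)=\rho(A)$ and forces $\rho(B)<\rho(A)$. Now Theorem~\ref{second.theorem}(a) applies and gives that $f$ and $A$ are dual of order $q:=\rho(A)=\rho(f)\in\N$ with $f$ of the form \eqref{simple-f}; finally $\rho(B)<q$ with $\rho(B)\in\N\cup\{0\}$ gives $\rho(B)\le q-1$.

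For part~(b), after~(a) I may write $f=c+\sum_{j\ge1}F_je^{w_jz^q}$ with $w_j=s_je^{i\theta}$, $s_j>0$, and $A=\sum_{i\ge0}A_ie^{v_iz^q}$ with $v_0=0$ and $v_i=-t_ie^{i\theta}$, $t_i>0$ for $i\ge1$, all multipliers lying in $\operatorname{Exp}_{q-1}$. Writing $f'=\sum_{j\ge1}\tilde F_je^{w_jz^q}$ and $f''=\sum_{j\ge1}G_je^{w_jz^q}$ (the constant term $c$ contributes nothing to $f'$ or $f''$) and grouping $f''+Af'+Bf=0$ by the frequency $\sigma e^{i\theta}$, $\sigma\in\R$, gives for every $\sigma$ a vanishing coefficient in $\operatorname{Exp}_{q-1}$,
$$G_k+\sum_{\substack{i\ge0,\,j\ge1\\ s_j-t_i=\sigma}}A_i\tilde F_j+BF_k=0,\qquad t_0:=0,$$
where the $G_k$- and $BF_k$-terms occur only when $\sigma=s_k$. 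The decisive structural feature is that $f''$ and $Bf$ feed only the frequencies $0,s_1,\dots,s_m$, whereas $Af'$ feeds the frequencies $s_j-t_i$, which may lie outside $[0,s_m]$.

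I would then run an extremal peeling argument on these equations. The most negative frequency produced by $Af'$ is $s_{\min}-t_{\max}$; since $f''$ and $Bf$ contribute only non-negative frequencies, its coefficient cannot be cancelled unless that extreme value is attained by several pairs $(i,j)$ or equals $0$. Tracing the consequences forces all the numbers $s_j-t_i$ to share a single sign, i.e.\ the points $w_j+v_i$ lie on one ray through the origin, which is precisely the strong-duality condition of Definition~\ref{s-commensurable}. For the commensurability equivalence, assuming the $s_j$ lie in $s\Z$, the same boundary analysis pins the extreme $A$-frequency to an $f$-frequency (so $t_{\max}\in s\Z$), and peeling off one frequency at a time propagates $t_i\in s\Z$ to every $i$, giving commensurable frequencies of $A$; the reverse implication is symmetric, obtained from the dual rewriting $Af'=-(f''+Bf)$ with the roles of $\{s_j\}$ and $\{t_i\}$ interchanged.

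The reduction in~(a) is routine once the division theory supplies that $B$ is an exponential polynomial, and the subsequent growth comparison is elementary. \emph{The main obstacle is the extremal bookkeeping in~(b):} the cases in which an extreme frequency $s_j-t_i$ is attained by several index pairs simultaneously, so that it is the accompanying $\operatorname{Exp}_{q-1}$-multipliers rather than the frequencies themselves that must cancel. Ruling out (or carefully exploiting) these coincidences — and thereby both forcing the common sign and propagating commensurability without loss — is the step that requires the most care.
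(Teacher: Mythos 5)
Your proposal is sound in outline and takes a genuinely different route from the paper, especially in part (a). The paper does \emph{not} reduce to Theorem~\ref{second.theorem}: with $B$ merely entire and small, it first proves $\rho(f)=\rho(A)=q$ (excluding $\rho(f)<\rho(A)$ by a proximity-function estimate built on \eqref{m.eq} and \eqref{T}, and excluding the remaining cases by the proof of \cite[Theorem~3.6]{GOP}), then obtains duality by showing $m(r,Af'/f)=o(r^q)$ and re-running the argument of \cite{WGH}, and only at the very end reads off from \eqref{substitution} that $B$ is an exponential polynomial of order $\leq q-1$. You invert this order: you make $B$ an exponential polynomial at the outset via the quotient theorem of \cite{division} applied to $Bf=-(f''+Af')$ (your simple-zeros digression is unnecessary --- the quotient is entire simply because $B$ is entire by hypothesis), deduce $\rho(B)<\rho(A)$ from \eqref{T}, and then invoke Theorem~\ref{second.theorem}(a) as a black box. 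This is legitimate and arguably cleaner; the division theorem carries the load that the [GOP]/[WGH] machinery carries in the paper, and the paper's own one-sentence claim that \eqref{substitution} forces $B$ to be an exponential polynomial rests on essentially the same appeal. In part (b) your bookkeeping is also more elementary than the paper's preparations: the paper proves $\lambda_k=w_1$ (its \eqref{ordered-leading-coefficients}) by comparing proximity functions via Steinmetz's theorems \cite{Stein2}, and proves \eqref{maxcoe.eq} by counting zeros via \cite[Theorem~2.2]{WH}; in your direct grouping, the coefficient at the most negative frequency $s_1-t_k$ is the single product $A_k\tilde F_1\not\equiv0$ (the extreme value is attained by exactly one pair, so your ``several pairs'' alternative is vacuous there), which forces $s_1-t_k=0$, and \eqref{maxcoe.eq} is just the coefficient equation at the frequency $s_m$, obtained for free. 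The peeling induction you describe is then exactly the paper's case analysis (I)/(II).

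One claim in part (b) is wrong as stated and needs repair: the reverse implication (frequencies of $A$ commensurable $\Rightarrow$ frequencies of $f$ commensurable) is \emph{not} obtained by symmetry, ``interchanging the roles of $\{s_j\}$ and $\{t_i\}$.'' The equation is structurally asymmetric: $f''$ and $Bf$ feed only the $f$-frequencies $\{0,s_1,\ldots,s_m\}$, and nothing analogous feeds the pure $A$-frequencies, so the rewriting $Af'=-(f''+Bf)$ does not swap the two families. The correct argument --- the paper's case (ii) --- is a second, separate peeling run from the opposite extreme: starting from the anchor $t_k=s_1$, the frequency $s_2-t_k$ is smaller than every $s_j-t_i$ with $j>1$, $i\neq k$, so by (I)/(II) it must equal either $s_1$ or $s_1-t_p$ for some $p<k$, whence $s_2$ is in the lattice, and one continues upward through $s_3,\ldots,s_m$. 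This fix stays entirely within your extremal framework, but it is a distinct induction, not a role interchange; since you yourself flag the extremal bookkeeping as the delicate step, this is precisely where that care is needed. A last definitional point: the one-ray condition on the points $w_j+\lambda_i$ is not by itself ``precisely'' strong duality, since Definition~\ref{s-commensurable} presupposes commensurable frequencies with a shared common factor; it is the one-ray condition \emph{combined with} your commensurability equivalence that yields the strong duality asserted ``in both cases'' by the theorem.
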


\begin{proof}
(a) Suppose that $0\leq \rho(f)\leq \rho(A)-1$. The case $\rho(f)=0$ is not possible because $f$ is transcendental. 
Hence $\rho(f)\geq 1$. But now
	\begin{equation*}
	|A|\leq |f''/f'|+|B||f/f'|
	\end{equation*}
and the assumption $T(r,B)=o(T(r,A))$ imply
	$$
	T(r,A)=m(r,A)\leq m(r,B)+O\left(r^{\rho(A)-1}\right)=o(T(r,A)),
	$$
which is a contradiction. Here we have used \eqref{m.eq} for $h=f/f'$ and $g=f'$, as well as
\eqref{T} for $A$ in place of~$f$.
The following two cases are also impossible by the proof of \cite[Theorem~3.6]{GOP}:
	\begin{itemize}
	\item[(1)] $\rho(f)=\rho(A)$ and either $F_0(z)\equiv 0$ or $F'_0(z)\not\equiv 0$.
	\item[(2)] $\rho(f)\geq \rho(A)+1$.
	\end{itemize}
Thus $\rho(f)=\rho(A)=q\geq 1$ and $f$ has the representation \eqref{simple-f}. 

We proceed to prove that $f$ and $A(z)$ are dual exponential polynomials.
Using \eqref{ldeAB}, we find that
	$$
	m\left(r,\frac{Af'}{f}\right)=O(\log r)+m(r,B)=o(T(r,A))=o\left(r^q\right).
	$$
The formula (7.3) in \cite{WGH} should be replaced by this. Thus the formula (7.7) in \cite{WGH} holds, and the reasoning in \cite{WGH} shows that $f$ and $A(z)$ are dual exponential polynomials.

To complete the proof of (a), it suffices to prove that $B(z)$ is an exponential polynomial of order
$\rho(B)\leq q-1$.
Since the frequencies $w_j$ of $f$ are all on one ray, we may  appeal to a rotation, and suppose that $w_1,\ldots,w_m\in\R_+$. By
renaming the frequencies $w_j$, if necessary, we may further suppose
that $0<w_1<\cdots<w_m$. Thus the dual coefficient must
be of the form
	\begin{equation}\label{AA}
	A(z)=A_0(z)+\sum_{j=1}^k A_j(z)e^{-\lambda_j z^q},
	\end{equation}
where $A_j(z)\not\equiv 0$ for all $j\in \{1,\ldots,k\}$ and $\lambda_1,\ldots,\lambda_k\in\R_+$.
Renaming the frequencies $\lambda_j$, if necessary, we may suppose that $0<\lambda_1<\cdots <\lambda_k$.
Write
	$$
	f'(z)=\sum_{j=1}^m G_j(z)e^{w_jz^q}\quad\textnormal{and}\quad
	f''(z)=\sum_{j=1}^m H_j(z)e^{w_jz^q},
	$$
where $G_j(z)=F_j'(z)+qw_jz^{q-1}F_j(z)\not\equiv 0$ and $H_j(z)=G_j'(z)+qw_jz^{q-1}G_j(z)\not\equiv 0$. Next, write $-Af'=Bf+f''$
in the form
	\begin{equation}\label{substitution}
	-\left(\sum_{j=1}^kA_je^{-\lambda_jz^q}\right)\left(\sum_{j=1}^m G_je^{w_jz^q}\right)=cB+\sum_{j=1}^m (A_0G_j+BF_j+H_j)e^{w_jz^q}.
	\end{equation}
From \eqref{substitution} we find that $B$ is an exponential polynomial of order $\rho(B)\leq q$.
In fact, from \eqref{T} and the assumption $T(r,B)=o(T(r,A))$, it follows that $\rho(B)\leq q-1$.

(b) We begin with some preparations. From \cite{Stein1} and \cite{Stein2}, we have
    $$
    m\left(r,\sum_{j=1}^kA_je^{-\lambda_jz^q}\right)=T\left(r,\sum_{j=1}^kA_je^{-\lambda_jz^q}\right)
    =\frac{\lambda_k}{\pi}r^q+o(r^{q}),
    $$
and
    \begin{eqnarray*}
    &&m\left(r,\bigg\{cB+\sum_{j=1}^m (A_0G_j+BF_j+H_j)e^{w_jz^q}\bigg\}\Big/\sum_{j=1}^m G_je^{w_jz^q}\right)\\
    &&\qquad \qquad=
    \frac{2w_m-2(w_m-w_1)}{2\pi}r^q+o(r^{q})=
    \frac{w_1}{\pi}r^q+o(r^{q}).
    \end{eqnarray*}
Therefore, we deduce that
	\begin{equation}\label{ordered-leading-coefficients}
	0<\lambda_1<\cdots <\lambda_k=w_1<\cdots <w_m.
	\end{equation}
Thus from \eqref{substitution}, it follows that 
	\begin{equation}\label{B}
	-A_kG_1=cB.
	\end{equation}
If $A_0G_m+BF_m+H_m\not\equiv0$, then from \cite[Theorem 2.2]{WH} and \eqref{ordered-leading-coefficients}, we get
    \begin{eqnarray*}
    N(r,0,L)&=&\frac{2(w_m-w_1)+2(\lambda_k-\lambda_1)}{2\pi}r^q+O(r^{q-1}+\log r)\\
    &=&\frac{w_m-\lambda_1}{\pi}r^q+O(r^{q-1}+\log r),\\
    N(r,0,R)&=&\frac{w_m}{\pi}r^q+O(r^{q-1}+\log r),
    \end{eqnarray*}
where $N(r,0,L)$ and $N(r,0,R)$ are the counting functions of zeros of the exponential
polynomials on the left-hand side and on the right-hand side of \eqref{substitution}, respectively. 
This implies $w_m=w_m-\lambda_1$, which is impossible. Thus we have
    \begin{equation}\label{maxcoe.eq}
    A_0G_m+BF_m+H_m\equiv 0.
    \end{equation}
Now \eqref{substitution} reduces to
    \begin{equation}\label{reduce.eq}
    -\left(\sum_{j=1}^kA_je^{-\lambda_jz^q}\right)\left(\sum_{j=1}^m G_je^{w_jz^q}\right)
    =cB+\sum_{j=1}^{m-1} (A_0G_j+BF_j+H_j)e^{w_jz^q}.
    \end{equation}
From the Borel-Nevanlinna theorem, and from $A_iG_j\not\equiv 0$ for $j\in\{1,2,\ldots,m\}$ and $i\in\{1,2,\ldots,k\}$, it follows that there are only two possibilities:
\begin{itemize}
\item[(I)]
For some pairs $(j,i)$, where $j\in\{1,2,\ldots,m\}$ and
$i\in\{1,2,\ldots,k\}$, there exists $\ell\in\{0,1,\ldots,m-1\}$ such that
    \begin{equation}\label{I.eq}
    w_j-\lambda_i=w_\ell.
    \end{equation}
\item[(II)]
For some pairs $(j,i)$, where $j\in\{1,2,\ldots,m\}$ and
$i\in\{1,2,\ldots,k\}$, there exist $s\in \{1,2,\ldots,m\}\setminus\{j\}$ and
$t\in \{1,2,\ldots,k\}\setminus\{i\}$ such that
    \begin{equation}\label{II.eq}
    w_j-\lambda_i=w_s-\lambda_t.
    \end{equation}
\end{itemize}

After these preparations we proceed to prove that the frequencies of $f$ are commensurable if and only if the frequencies of $A(z)$ are commensurable. By appealing to \eqref{ordered-leading-coefficients} and to a change of variable as in Example~\ref{Kazuya-ex}, we may suppose that $w_1=\lambda_k\in\N$.
Thus we prove that $w_j\in\N$ for $j\in\{1,\ldots,m\}$ if and only if $\lambda_i\in\N$
for $i\in\{1,\ldots,k\}$.
\begin{itemize}
\item[(i)]
Suppose that $w_j\in\N$ for $j\in\{1,\ldots,m\}$. From \eqref{ordered-leading-coefficients}, we see that
$w_m-\lambda_1=\max_{j,i}\{w_j-\lambda_i\}$ and $w_m-\lambda_1>w_j-\lambda_i$ for any $j\neq m$ and $i\neq 1$. Hence, from \eqref{I.eq} and \eqref{II.eq}, there exists $p<m$ such that $w_m-\lambda_1=w_p$,
which implies that $\lambda_1\in\N$. In addition, from \eqref{ordered-leading-coefficients}, we have $w_m-\lambda_2>w_j-\lambda_i$ for any $j\neq m$ and $i>2$. Thus, from \eqref{I.eq} and \eqref{II.eq}, there are only two possibilities: (1) There exists $p<m$ such that $w_m-\lambda_2=w_p-\lambda_1$. (2) There exists $p<m$ such that $w_m-\lambda_2=w_p$. In both cases, it follows that $\lambda_2\in\N$. Repeating this argument for $k$ times gives us $\lambda_i\in\N$ for $i\in\{1,\ldots,k\}$.
\item[(ii)]
Suppose that $\lambda_i\in\N$ for $i\in\{1,\ldots,k\}$. 
From \eqref{ordered-leading-coefficients}, we have $\lambda_k=w_1$, and consequently $w_1\in\N$. Moreover, 
from \eqref{ordered-leading-coefficients}, we have $w_2-\lambda_k<w_j-\lambda_i$ for any $j>1$ and $i\neq k$. 
Thus, from \eqref{I.eq} and \eqref{II.eq}, there are only two possibilities: There exists $p<k$ such that either $w_2-\lambda_k=w_1-\lambda_p$ or $w_2-\lambda_k=w_1$. In both cases, we have $w_2\in\N$. Repeating this argument for $m$ times gives us $w_j\in\N$ for $i\in\{1,\ldots,m\}$.

\end{itemize}

If the frequencies are commensurable for one of $f,A(z)$, then they are commensurable for
both of $f,A(z)$ by the reasoning above. The remaining fact that $f$ and $A(z)$ are strongly dual
exponential polynomials now follows by \eqref{ordered-leading-coefficients}.
\end{proof}

The assumption $\rho(Af')<\rho(f)$ in Theorem~\ref{second.theorem}(b) seems to be the only known
sufficient condition for the conclusion $q=1$. However, in the case of Frei's result \eqref{Frei-eqn}, we have
	$$
	A(z)f'(z)=e^{-z}\sum_{j=1}^m jC_je^{jz}=\sum_{j=0}^{m-1}(j+1)C_{j+1}e^{jz},
	$$
and so $\rho(Af')=\rho(f)=1$. This shows that $q=1$ may happen even if $\rho(Af')=\rho(f)$.
Theorem~\ref{one-term} below shows that $f$ having only one large exponential term is also a sufficient condition
for $q=1$. In contrast, if $A(z)$ has only one large exponential term, then $f$ can have multiple
large exponential terms as in \eqref{Frei-eqn}.

\begin{theorem}\label{one-term}
Suppose that $f(z)=F_0(z)+F_1(z)e^{wz^q}$ is a solution of \eqref{ldeAB}, where $A(z)$ is an
exponential polynomial and $B(z)$ is an entire function satisfying $T(r,B)=o(T(r,A))$. Then $q=1$, and there
are constants $c,b\in\C\setminus\{0\}$ and a non-trivial polynomial $P(z)$ such that
	$$
	f(z)=c+be^{wz},\ A(z)=\frac{b}{c}P(z)-w+P(z)e^{-wz}\ \text{and}\ B(z)=-\frac{wb}{c}P(z).
	$$
\end{theorem}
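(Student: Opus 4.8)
The plan is to apply Theorem~\ref{dual-thm}(a) to pin down the global shape of $f,A,B$ and then to exploit the hypothesis that $f$ carries a single large exponential term. First I would dispose of the case that $A$ is a polynomial by a short preliminary argument: then $T(r,A)=O(\log r)$, so $T(r,B)=o(T(r,A))$ forces $B$ to be constant, and matching the frequency-$w$ group against the frequency-$0$ group in \eqref{ldeAB} degenerates (for instance, in the smallest case it gives $A\equiv -w$ and $B\equiv 0$), contrary to the domination hypothesis, which presupposes $T(r,A)\to\infty$. Thus $A$ is transcendental and Theorem~\ref{dual-thm}(a) applies, yielding that $f$ and $A$ are dual of order $q$, that $f(z)=c+F_1(z)e^{wz^q}$ with $c\in\C\setminus\{0\}$ and $F_1\in\operatorname{Exp}_{q-1}$, $F_1\not\equiv 0$, and that $B\in\operatorname{Exp}_{q-1}$.

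Next I would specialize the normalized computation in the proof of Theorem~\ref{dual-thm} to $m=1$. Writing $A=A_0+\sum_{i=1}^k A_i e^{-\lambda_i z^q}$ and $f'=G_1 e^{wz^q}$, $f''=H_1 e^{wz^q}$ with $G_1=F_1'+qwz^{q-1}F_1\not\equiv 0$ and $H_1=G_1'+qwz^{q-1}G_1$, equation \eqref{substitution} becomes $-\sum_{i=1}^k A_iG_1 e^{(w-\lambda_i)z^q}=cB+(A_0G_1+BF_1+H_1)e^{wz^q}$, and \eqref{ordered-leading-coefficients} gives $\lambda_k=w$. Since the right-hand side carries only the frequencies $0$ and $w$, whereas the left-hand side carries $w-\lambda_i\in(0,w)$ for each $i<k$ with $A_iG_1\not\equiv 0$, a Borel-type comparison forces $k=1$, so $A=A_0+A_1e^{-wz^q}$. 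Matching the two surviving frequencies then produces the two relations $A_0G_1+BF_1+H_1=0$ and $A_1G_1=-cB$.

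Eliminating $B$ between these relations yields $G_1'/G_1=\tfrac1c A_1F_1-A_0-qwz^{q-1}$, whose right-hand side is an exponential polynomial and hence entire; therefore $G_1$ is zero-free, and being of finite order it equals $e^{p}$ for a polynomial $p$ with $\deg p\le q-1$. Consequently $f'=G_1e^{wz^q}=e^{p(z)+wz^q}$. This is the crux, and I expect it to be the main obstacle: since $f-c=F_1e^{wz^q}$ is an exponential polynomial and $(f-c)'=f'$, the function $e^{p+wz^q}$ would have to admit an exponential-polynomial primitive. I would resolve this through the non-integrability phenomenon stressed in the introduction: any exponential-polynomial primitive of $e^{p+wz^q}$ must take the form $Re^{p+wz^q}+\text{const}$ with $R$ a polynomial satisfying $R'+(p'+qwz^{q-1})R=1$, and since $p'+qwz^{q-1}$ has degree exactly $q-1$ (because $w\neq 0$), a degree count shows this equation has no polynomial solution once $q\geq 2$. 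Hence $q=1$.

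Finally, with $q=1$ the multiplier $F_1$ is a polynomial and $G_1=F_1'+wF_1=e^{p}$ with $p$ constant, so $G_1$ is a nonzero constant; solving $F_1'+wF_1=\text{const}$ among polynomials forces $F_1\equiv b$ for some $b\in\C\setminus\{0\}$, whence $f=c+be^{wz}$, $G_1=wb$ and $H_1=w^2b$. Substituting into the two relations and setting $P:=-\tfrac{c}{wb}B$, which is a polynomial since $B\in\operatorname{Exp}_0$, gives $A_1=P$, $A_0=\tfrac{b}{c}P-w$ and $B=-\tfrac{wb}{c}P$, i.e.\ exactly the asserted forms of $f$, $A$ and $B$. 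Non-triviality of $P$ follows from $A_1\not\equiv 0$, which holds because $A$ genuinely carries the dual term $e^{-wz}$. I would close by remarking that the preliminary rotation used to order the frequencies does not affect these rotation-covariant conclusions.
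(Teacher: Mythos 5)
Your proof is correct and follows essentially the same route as the paper's: run the machinery of Theorem~\ref{dual-thm} with $m=1$, force $k=1$ by a Borel comparison, extract the two relations $-A_1G_1=cB$ and $A_0G_1+BF_1+H_1=0$, conclude from the pole comparison that $G_1$ is zero-free, hence $G_1=e^{p}$ with $\deg p\le q-1$, force $q=1$, and assemble the stated forms of $f$, $A(z)$ and $B(z)$; even your closing identification $A_1=P$, $A_0=\frac{b}{c}P-w$, $B=-\frac{wb}{c}P$ and the nontriviality of $P$ from $A_1\not\equiv 0$ coincide with the paper. The one point of real divergence is the step forcing $q=1$: the paper merely asserts that $F_1(z)e^{wz^q}=\int^z e^{w\zeta^q+p(\zeta)}\,d\zeta$ can be an exponential polynomial only when $q=1$, leaning on the non-integrability phenomenon recalled in its introduction, whereas you give an explicit argument via the equation $R'+(p'+qwz^{q-1})R=1$ and a degree count. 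Your version is more self-contained, but as written it has a small, fillable gap: a priori $R=F_1e^{-p}$ is only an exponential polynomial of order $\le q-1$, not a polynomial. To close it, apply the same Borel reasoning you use elsewhere: if $R$ had a nonzero frequency $u$ at its own order $s\ge 1$, the corresponding multiplier $R_u$ would satisfy $R_u'+(suz^{s-1}+p'+qwz^{q-1})R_u=0$, making $R_u$ a constant multiple of $e^{-uz^{s}-p(z)-wz^{q}}$, which has order $q$ and so cannot lie in $\operatorname{Exp}_{s-1}$; iterating downward shows $R$ is a polynomial, after which your degree count (the left-hand side has degree $\deg R+q-1\ge q-1\ge 1$ when $q\ge 2$) applies verbatim. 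Finally, your preliminary dismissal of polynomial $A$ addresses a degenerate case the paper passes over silently (both proofs ultimately invoke Theorem~\ref{dual-thm}, which assumes $A$ transcendental); your argument there is loose, but the issue is one of interpreting the domination hypothesis rather than a genuine obstacle.
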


\begin{proof}
We proceed similarly as in the proof of Theorem~\ref{dual-thm} until \eqref{reduce.eq}, which
now reduces to the form
	\begin{equation}\label{m=1}
    -\left(\sum_{j=1}^kA_je^{-\lambda_jz^q}\right)G_1e^{wz^q}=cB,
	\end{equation}
where $F_0(z)\equiv c\in\C\setminus\{0\}$.
Hence $k=1$, and consequently $A(z)$ reduces to the form
	$$
	A(z)=A_0(z)+A_1(z)e^{-wz^q}.
	$$
From \eqref{m=1} and \eqref{maxcoe.eq}, with $k=1=m$, we find that
	$$
	-A_1G_1=cB\quad\textnormal{and}\quad -A_0G_1=BF_1+H_1.
	$$
In other words,
	\begin{equation}\label{G-has-no-zeros}
	c^{-1}A_1G_1F_1=A_0G_1+H_1=A_0G_1+G_1'+qwz^{q-1}G_1.
	\end{equation}
Dividing both sides of \eqref{G-has-no-zeros} by $G_1$, we observe that at every zero of $G_1$
the right-hand side has a pole but the left-hand side does not. Thus $G_1$ has no zeros, and so
we may write it in the form $G_1=e^g$, where $g(z)=a_{q-1}z^{q-1}+\cdots+a_0$
is a polynomial of degree $\leq q-1$. Since
	$$
	G_1=F_1'+qwz^{q-1}F_1=e^g,
	$$
we obtain $\left(F_1(z)e^{wz^q}\right)'=e^{wz^q+g(z)}$, and consequently
	\begin{equation}\label{F1}
	F_1(z)e^{wz^q}=\int^z e^{w\zeta^q+a_{q-1}\zeta^{q-1}+\cdots+a_0}\, d\zeta.
	\end{equation}
Here the right-hand side is an exponential polynomial, which happens only if $q=1$.

Since $q=1$, we see from \eqref{F1} that $F_1(z)$ reduces to a non-zero constant, say $F_1(z)\equiv b$.
Thus $f(z)=c+be^{wz}$, and we have $G_1(z)\equiv w b$ and $H_1(z)\equiv w^2b$. A substitution to
\eqref{G-has-no-zeros} followed by a simplification gives
	$$
	\frac{b}{c}A_1=A_0+w.
	$$
There is no restriction for $A_1$ other than the fact that $A$ is an exponential polynomial. Thus
we may suppose that $A_1$ is any non-trivial polynomial, say $A_1=P$. This gives us $A_0=\frac{b}{c}P-w$,
and finally $B=-\frac{wb}{c}P$.
\end{proof}

Example~\ref{Ozawa-ex} shows that the coefficient $B(z)$ in \eqref{ldeAB} can be a polynomial.
Next, we prove that this is equivalent to $A_0(z)$ in \eqref{AA} being a polynomial, and reveal another
sufficient condition for the conclusion $q=1$.

\begin{proposition}\label{polynomials-lemma}
Under the assumptions of Theorem~\ref{dual-thm}, the term $A_0(z)$ of $A(z)$ in \eqref{AA}
is a polynomial if and only if $B(z)$ is a polynomial. Moreover, if the multipliers of $f$ and
of $A(z)$ are constants, then $q=1$ and $B(z)$ is a constant function.
\end{proposition}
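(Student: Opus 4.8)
The plan is to isolate the single exponential term of $f$ with largest frequency, read off a reduced second-order equation that it satisfies, and then handle the two implications by quite different means. Concretely, I would recall from the proof of Theorem~\ref{dual-thm} the two coefficient identities \eqref{B}, namely $-A_kG_1=cB$, and \eqref{maxcoe.eq}, namely $A_0G_m+BF_m+H_m\equiv 0$, together with $G_m=F_m'+qw_mz^{q-1}F_m$ and $H_m=G_m'+qw_mz^{q-1}G_m$. The key observation is that \eqref{maxcoe.eq} says exactly that the top term $\phi:=F_me^{w_mz^q}$ of $f$ solves the \emph{reduced equation} $\phi''+A_0\phi'+B\phi=0$, since $\phi'=G_me^{w_mz^q}$ and $\phi''=H_me^{w_mz^q}$. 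This reduced equation has the transcendental exponential polynomial $\phi$ of order exactly $q$ as a solution, while its coefficients $A_0$ and $B$ have order $\leq q-1$; it drives both directions of the equivalence.

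For the implication ``$A_0$ polynomial $\Rightarrow$ $B$ polynomial'' I would solve the reduced equation for $B$, writing $B=-\phi''/\phi-A_0\,\phi'/\phi$, and estimate proximity functions. Since $\phi$ is of finite order, the lemma on the logarithmic derivative gives $m(r,\phi''/\phi)=O(\log r)$ and $m(r,\phi'/\phi)=O(\log r)$, and $m(r,A_0)=O(\log r)$ because $A_0$ is a polynomial. Hence $m(r,B)=O(\log r)$, and as $B$ is entire this forces $T(r,B)=O(\log r)$, so $B$ is a polynomial.

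The converse ``$B$ polynomial $\Rightarrow$ $A_0$ polynomial'' is where the main obstacle lies, and the symmetric attempt fails: solving for $A_0=-\phi''/\phi'-B\phi/\phi'$ introduces the factor $\phi/\phi'$, whose proximity function is \emph{not} $O(\log r)$ but is of the order of $N(r,0,\phi)$, i.e.\ comparable to $r^q$. Instead I would reinterpret $\phi''+A_0\phi'+B\phi=0$ as a fresh instance of \eqref{ldeAB}. Arguing by contradiction, suppose $A_0$ is transcendental; then $A_0$ is a transcendental exponential polynomial, and since $B$ is a polynomial we have $T(r,B)=o(T(r,A_0))$. Applying Theorem~\ref{dual-thm}(a) to this reduced equation would force $\rho(\phi)=\rho(A_0)\leq q-1$, contradicting $\rho(\phi)=q$. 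Therefore $A_0$ must be a polynomial.

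For the ``moreover'' statement I would assume all multipliers of $f$ and of $A$ are constants and compute $B$ in two ways. With $F_m$ constant, \eqref{maxcoe.eq} gives $B=-\big(qw_mz^{q-1}A_0+q(q-1)w_mz^{q-2}+q^2w_m^2z^{2q-2}\big)$, a polynomial of degree $2q-2$ with nonzero leading coefficient $-q^2w_m^2$. On the other hand, with $F_1$ constant we have $G_1=qw_1z^{q-1}F_1$, so \eqref{B} exhibits $B$ as a polynomial of degree $q-1$. Since $B\not\equiv 0$ (otherwise \eqref{B} forces $A_kG_1\equiv 0$, impossible as $A_k\not\equiv 0$ in \eqref{AA} and $G_1\not\equiv 0$), comparing the two degrees yields $2q-2=q-1$, hence $q=1$, and then $B$ has degree $0$, i.e.\ $B$ is a constant.
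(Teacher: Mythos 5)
Your proof is correct, and for the equivalence it takes a genuinely different route from the paper's. Both arguments start from the same identity: the paper substitutes $G_m$ and $H_m$ into \eqref{maxcoe.eq} to obtain the second order equation \eqref{Fm-equation} for the multiplier $F_m$, while you keep the exponential factor and read \eqref{maxcoe.eq} as saying that $\phi=F_me^{w_mz^q}$ solves $\phi''+A_0\phi'+B\phi=0$; this is the same identity in different packaging, but the packaging matters for what follows. For ``$A_0$ polynomial $\Rightarrow$ $B$ polynomial'' the paper invokes \cite[Corollary~1]{Gary2} (polynomial coefficient of $F_m'$, transcendental coefficient of $F_m$ forces infinite-order solutions) to contradict $\rho(F_m)\leq q-1$, whereas you solve for $B$ and use the lemma on the logarithmic derivative to get $T(r,B)=m(r,B)=O(\log r)$; your version is more elementary and self-contained. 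For the converse, the paper argues asymptotically along rays in a sector where $A_0$ blows up, using \cite[Corollary~1]{Gary3}, whereas you bootstrap Theorem~\ref{dual-thm}(a) itself on the reduced equation: if $A_0$ were transcendental, then $T(r,B)=o(T(r,A_0))$ and duality would force $\rho(\phi)=\rho(A_0)\leq q-1$, contradicting $\rho(\phi)=q$. This self-application is legitimate (the theorem is proved before the proposition) and avoids both external citations; note that it is precisely here that retaining the factor $e^{w_mz^q}$ is essential, since $F_m$ itself need not be transcendental, so Theorem~\ref{dual-thm} could not be applied to \eqref{Fm-equation} directly. Your ``moreover'' argument (computing $B$ twice and comparing degrees $2q-2$ and $q-1$) is essentially the paper's, which phrases the same computation as the impossibility of $Q(z)\equiv 0$ in \eqref{Fm-equation} when $q\geq 2$. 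One cosmetic imprecision: your aside that $m(r,\phi/\phi')$ is ``comparable to $r^q$'' overstates the obstruction, since for $\phi=F_me^{w_mz^q}$ one has $\phi/\phi'=F_m/G_m$ and hence $m(r,\phi/\phi')=O\bigl(r^{q-1+\varepsilon}\bigr)$; but your point that this quantity need not be $O(\log r)$, so the symmetric attack fails, is right, and the remark plays no role in the actual proof.
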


\begin{proof}
From the proof of Theorem~\ref{dual-thm} we find that \eqref{maxcoe.eq} holds, that is,
	\begin{equation}\label{subst}
	A_0G_m+BF_m+H_m=0,
	\end{equation}
where
	\begin{eqnarray*}
	G_m &=& F_m'+w_mqz^{q-1}F_m,\\
    H_m &=& F_m''+2w_mqz^{q-1}F_m'+\left(w_mq(q-1)z^{q-2}+w_m^2q^2z^{2q-2}\right)F_m.
	\end{eqnarray*}
Thus $F_m$ solves the second order differential equation
	\begin{equation}\label{Fm-equation}
	F''_m + P(z)F'_m+Q(z)F_m=0,
    \end{equation}
where
	\begin{eqnarray*}
	P(z) &=& 2w_mqz^{q-1}+A_0,\\
	Q(z) &=& w_mqz^{q-1}A_0+w_mq(q-1)z^{q-2}+w_m^2q^2z^{2(q-1)}+B.
	\end{eqnarray*}
	
Suppose first that $A_0(z)$ is a polynomial. If $B(z)$ is transcendental, then it follows from
\eqref{Fm-equation} and \cite[Corollary~1]{Gary2} that $\rho(F_m)=\infty$, which is a contradiction. Hence $B(z)$ must
be a polynomial. Conversely, suppose that $B(z)$ is a polynomial. Suppose on the contrary to the
assertion that $A_0(z)$ is a transcendental exponential polynomial. Then there exists an open sector $S$
such that $A_0(z)$ blows up exponentially in $S$. Using \cite[Corollary~1]{Gary3} and
$\rho(F_m)\leq q-1$ in \eqref{Fm-equation}, we obtain on almost every ray in $S$ that
	$$
	|w_mqz^{q-1}||A_0(z)|
	\leq O\left(|z|^{\max\{2q,\deg(B)\}}\right)+O\left(|z|^{q-2+\varepsilon}|A_0(z)|\right).
	$$
However, this is obviously a contradiction, and hence $A_0(z)$ is a polynomial.

Finally, suppose that the multipliers of $f$ and of $A(z)$ are constants. From \eqref{B}, we find that
$B(z)=Cz^{q-1}$ for some constant $C\in\C\setminus\{0\}$. Since $F_m(z)$ is a non-zero constant function, it follows
that the coefficient $Q(z)$ in \eqref{Fm-equation} vanishes identically. But this is not possible because $A_0(z)$
is a constant function, unless $q=1$.
\end{proof}

\begin{remark}
(a) The equation \eqref{Fm-equation} implies that every possible zero of $F_m$ is simple.

(b) Assuming that $A_0(z)$ is a polynomial, we give an alternative proof for the fact that $B(z)$ is a polynomial. 
We already know from Theorem~\ref{dual-thm} that $B(z)$ is of order $\leq q-1$.
Since the non-zero frequencies of $A(z)$ are
all on one ray by duality, it follows that the plane divides into $2q$ sectors of opening $\pi/q$ such
that in every other sector $A(z)$ either blows up exponentially or is asymptotic
to the polynomial $A_0(z)$. In the latter case, if $A_0(z)\equiv 0$, then $A(z)$ decays to
zero exponentially. Thus, from \cite[Theorem~7]{Gary2}, we deduce that $B(z)$ is a
polynomial. Note, in particular, that the constant $\mu$ in \cite[Theorem~7]{Gary2} satisfies
$\mu=\pi/q$. 
\end{remark}

\bigskip
\noindent
\textbf{Open problem 1.}
\emph{Under the assumptions of Theorem~\ref{dual-thm}, is it always true that $q=1$ and $B(z)$ is a polynomial?}

\bigskip

This problem is fragile in the sense that the desired conclusion is not valid if a minor modification in the assumptions of  Theorem~\ref{dual-thm} is performed. For example, the differential equation 
	$$
	f''-\bigl(qwz^{q-1} + z^{-1}e^{-wz^q}\bigr)f'-q(q-1)wz^{q-2}f=0
	$$  
possesses an exponential polynomial solution $f(z)=e^{wz^q}-1/(q-1)$ for any $q\geq2$. Moreover, the function $f(z)=e^{z^2}+1$ satisfies the differential equations 
	\begin{equation}\label{two-equations}
	\begin{split}
	f''+\left(\frac{e^{-z^2}-1}{2z}-2z\right)f'-f &= 0,\\
	f''-\frac{e^{-z^2}(z-1)+4z^2+z+1}{2z}f'+(z-1)f &= 0.
	\end{split}
	\end{equation}
The transcendental coefficients in \eqref{two-equations} are entire exponential polynomials with rational multipliers 
because $z=0$ is a removable singularity for both.


\section{Duality for higher order functions}\label{higher-order}

Next we construct examples of differential equations of order $n\geq 2$ having
an exponential polynomial solution $f$ of order $\rho(f)=n-1$ which is dual with one of the coefficients. 

\begin{example}\label{ex1}
If $H$ is an arbitrary entire function, then $f(z)=e^{z^2}+1$ solves
	\begin{equation}\label{third-order-ex}
	\begin{split}
	f'''+\left(1+e^{-z^2}\right)Hf''-(6+4z^2)f'-(2+4z^2)Hf&=0,\\
	f'''-2zf''+(H-4+He^{-z^2})f'-2zHf &= 0.
	\end{split}	
	\end{equation}
A particularly interesting case is when $H$ is either a polynomial or an exponential polynomial of order one. 
Thus either of the two possible coefficients can be dual with $f$. Examples of second order
dual solutions for third order equations can be found in \cite{WGH} but for
polynomial coefficients only.

We can use the relation  $zf''(z)=(2z^2+1)f'$ to see that, in addition to \eqref{third-order-ex}, the function 
$f(z)=e^{z^2}+1$ satisfies the equations 
	\begin{eqnarray*}
	f'''(z)-2zf'(z)-4f'(z)&=&0,\\
	(1+e^{-z^2})f'(z)-2zf(z)&=&0,\\
	(1+e^{-z^2})f''(z)-(2+4z^2)f(z)&=&0.
	\end{eqnarray*}
\end{example}

\begin{example}\label{ex2}
If $H$ is an arbitrary entire function, then $f(z)=e^{z^3}+1$ solves
	\begin{eqnarray*}
	f^{(4)}+\left(1+e^{-z^3}\right)Hf'''-9z^4f''-30\left(2+3z^3\right)f'-\left(6+54z^3+27z^6\right)Hf &=& 0,\\
	f^{(4)}-3z^2f'''+\left(H-18z+He^{-z^3}\right)f''-18f'-H\left(6z+9z^4\right)f &=& 0,\\
	f^{(4)}-3z^2f'''-27zf''+\left(H+27z^3+He^{-z^3}\right)f'-3z^2Hf &=& 0.
	\end{eqnarray*}
A particularly interesting case is when $H$ is an exponential polynomial of order at most two. 
Thus all three of the possible coefficients can be dual with $f$. Previous examples of third order dual solutions
do not seem to be known.

As in the previous example, we can use the relations $zf''(z)=(3z^3+2)f'(z)$ and $zf'''(z)=(3z^3+1)f''(z)+9z^2f'(z)$
to see that $f(z)=e^{z^3}+1$ satisfies the equations
	\begin{eqnarray*}
	f^{(4)}(z)-3z^2f'''(z)-18zf''(z)-18f'(z)&=&0,\\
	(1+e^{-z^3})f'(z)-3z^2f(z)&=&0,\\
	(1+e^{-z^3})f''(z)-(6z+9z^4)f(z)&=&0,\\
	(1+e^{-z^3})f'''(z)-(6+54z^3+27z^6)f(z)&=&0.
	\end{eqnarray*}
\end{example}

In light of Open problem 1 and the examples just discussed, it is natural to pose our second open problem.

\bigskip
\noindent
\textbf{Open problem 2.}
\emph{If a solution and the dominant coefficient are dual exponential polynomials of order $q$, then is 
the differential equation in question of order at least $q+1$?}

\bigskip
For the fragility of this problem, recall the equations \eqref{two-equations} satisfied by $f(z)=e^{z^2}+1$.
Moreover, the function $f(z)=e^{z^3}+1$ satisfies the third order equation
	\begin{equation*}
	f'''+\left(\frac{e^{-z^3}-1}{2z}\right)f''-3z(3z^3+5)f'-\frac{3}{2}(3z^3+2)f=0
	\end{equation*}
with entire coefficients. 

As the first initial step to knowing more about Open problem 2, we make a summary of the fundamental ideas 
in constructing Examples~\ref{ex1} and \ref{ex2}.

\begin{lemma}\label{Tohge-lemma} 
The function $f(z)=e^{z^q}+1$, $q\in\mathbb{N}$, possesses the following two properties:
\begin{itemize}
\item[ \textnormal{(i)}] $\displaystyle
(1+e^{-z^q})f^{(j+1)}(z)=\sum_{k=0}^{j} P_{j,k}(z)f^{(k)}(z), \quad j\in\mathbb{N}\cup\{0\}$,
\item[\textnormal{(ii)}]
$\displaystyle
f^{(q+1)}(z)=\sum_{\ell=1}^{q} Q_{\ell}(z)f^{(\ell)}(z)$,
\end{itemize}
where the $P_{j,k}(z)$ and $Q_{\ell}(z)$ are non-zero polynomials satisfying
\begin{itemize}
\item[\textnormal{(a)}]
$\displaystyle
\left\{
\begin{array}{l}
P_{j+1,j+1}(z)=P_{j,j}(z)-qz^{q-1}, \quad P_{0,0}(z)=qz^{q-1}, \\
P_{j+1,k}(z)=P_{j,k}'(z)+qz^{q-1}P_{j,k}(z)+P_{j, k-1}(z), \quad P_{j,-1}(z)\equiv 0, \quad 1\leq k\leq j, \\
P_{j+1,0}(z)=P_{j,0}'(z)+qz^{q-1}P_{j,0}(z),
\end{array}
\right.
$
\item[\textnormal{(b)}]
$ \displaystyle
Q_{\ell}(z)=-\binom{q}{\ell-1}(e^{-z^q})^{(q-\ell+1)}e^{z^q}.
$
\end{itemize}
\end{lemma}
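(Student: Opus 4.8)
The plan is to reduce both identities to one elementary observation: writing $f(z)=e^{z^q}+1$, one has $f^{(k)}(z)=R_k(z)e^{z^q}$ for every $k\in\N$, where the polynomials $R_k$ satisfy $R_1(z)=qz^{q-1}$ and $R_{k+1}=R_k'+qz^{q-1}R_k$. In particular $e^{-z^q}f'(z)=qz^{q-1}$ is a \emph{polynomial} of degree $q-1$, and more generally $(e^{-z^q})^{(s)}=\pi_s(z)e^{-z^q}$ for some $\pi_s\in\C[z]$, so that $(e^{-z^q})^{(s)}e^{z^q}=\pi_s(z)\in\C[z]$ for each $s\geq 0$. This last fact already shows that every $Q_\ell$ in (ii) is a polynomial, and the recursions in (a) manifestly keep the $P_{j,k}$ inside $\C[z]$.

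For part (ii) I would differentiate the identity $e^{-z^q}f'(z)=qz^{q-1}$ exactly $q$ times. The right-hand side then vanishes, being the $q$-th derivative of a polynomial of degree $q-1$, while the Leibniz rule turns the left-hand side into $\sum_{s=0}^{q}\binom{q}{s}(e^{-z^q})^{(s)}f^{(q+1-s)}$. Isolating the $s=0$ term $e^{-z^q}f^{(q+1)}$, multiplying through by $e^{z^q}$, and reindexing by $\ell=q+1-s$ yields $f^{(q+1)}=-\sum_{\ell=1}^{q}\binom{q}{q+1-\ell}(e^{-z^q})^{(q+1-\ell)}e^{z^q}f^{(\ell)}$. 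The symmetry $\binom{q}{q+1-\ell}=\binom{q}{\ell-1}$ together with $q+1-\ell=q-\ell+1$ then reproduces exactly the coefficients $Q_\ell$ in (b). The only points requiring care are that the sum begins at $\ell=1$ (because the $s=0$ term was removed) and that no $f^{(0)}$ term survives.

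For part (i) I would argue by induction on $j$. The base case $j=0$ is immediate, since $(1+e^{-z^q})f'=qz^{q-1}e^{z^q}(1+e^{-z^q})=qz^{q-1}(e^{z^q}+1)=qz^{q-1}f$, giving $P_{0,0}=qz^{q-1}$. For the inductive step I differentiate $(1+e^{-z^q})f^{(j+1)}=\sum_{k=0}^{j}P_{j,k}f^{(k)}$, which produces on the left the sum $(1+e^{-z^q})f^{(j+2)}-qz^{q-1}e^{-z^q}f^{(j+1)}$. The offending non-polynomial term $qz^{q-1}e^{-z^q}f^{(j+1)}$ is removed by feeding the inductive hypothesis back in the form $e^{-z^q}f^{(j+1)}=\sum_{k=0}^{j}P_{j,k}f^{(k)}-f^{(j+1)}$. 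After this substitution, $(1+e^{-z^q})f^{(j+2)}$ is a polynomial combination of $f,f',\dots,f^{(j+1)}$, and collecting the coefficient of each $f^{(k)}$ gives precisely the three-line recursion in (a): the top index $k=j+1$ contributes $P_{j,j}-qz^{q-1}$, a generic index $1\leq k\leq j$ contributes $P_{j,k}'+qz^{q-1}P_{j,k}+P_{j,k-1}$, and $k=0$ contributes $P_{j,0}'+qz^{q-1}P_{j,0}$.

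The genuinely delicate point — the only place where anything beyond routine differentiation enters — is this feedback substitution in the inductive step. It is what lets the identity close up over $\C[z]$ even though differentiation reintroduces the factor $e^{-z^q}$, and it is responsible both for the shift term $P_{j,k-1}$ and for the $-qz^{q-1}$ correction in the top coefficient; everything else is bookkeeping of binomial coefficients and index shifts. As a sanity check one may note the clean relation $(1+e^{-z^q})f^{(j+1)}=R_{j+1}f$ that holds simultaneously, the excess terms in (i) cancelling by virtue of the relations $R_kf'=qz^{q-1}f^{(k)}$ among the derivatives of $f$. I would close by remarking that the coefficients so produced are genuine polynomials, immediate from the recursions for $P_{j,k}$ and from $(e^{-z^q})^{(s)}e^{z^q}\in\C[z]$ for $Q_\ell$.
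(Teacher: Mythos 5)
Your proof is correct and follows essentially the same route as the paper's: part (i) by induction on $j$, where the term $qz^{q-1}e^{-z^q}f^{(j+1)}$ created by differentiation is absorbed via the rearranged induction hypothesis (the paper does this by adding and subtracting $qz^{q-1}f^{(j+1)}$, which is the identical manipulation), and part (ii) by applying the Leibniz rule to the $q$-fold derivative of $e^{-z^q}f'(z)=qz^{q-1}$, whose right-hand side vanishes. The only differences are cosmetic choices of indexing in the Leibniz sum.
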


\begin{proof}
First, let us prove (i) by induction on $j$.
Of course, by taking their logarithmic derivatives, we have $(1+e^{-z^q})f'(z)=qz^{q-1}f(z)$ immediately, that is, the case when $j=0$ follows with $P_{0,0}(z)=qz^{q-1}$.
Assume (i) is true for each $j=0, 1, \ldots , n$. Then
\begin{eqnarray*}
(1+e^{-z^q})f^{(n+2)}(z)
&=& qz^{q-1}e^{-z^q}f^{(n+1)}+\sum_{k=0}^n \bigl\{ P_{n,k}'(z)f^{(k)}(z)+P_{n,k}(z)f^{(k+1)}(z)\bigr\}\\
&=& qz^{q-1}(1+e^{-z^q})f^{(n+1)}(z)+\bigl\{P_{n,n}(z)-qz^{q-1}\bigr\}f^{(n+1)}(z)+\\
& & +\sum_{k=1}^n \bigl\{ P_{n,k}'(z)f^{(k)}(z)+P_{n,k-1}(z)\bigr\} f^{(k)}(z)+P_{n,0}'(z)f(z) \\
&=& qz^{q-1} \sum_{k=0}^n P_{n,k}(z)f^{(k)}(z) + \bigl\{P_{n,n}(z)-qz^{q-1}\bigr\}f^{(n+1)}(z)+\\
& & +\sum_{k=1}^n \bigl\{ P_{n,k}'(z)f^{(k)}(z)+P_{n,k-1}(z)\bigr\} f^{(k)}(z)+P_{n,0}'(z)f(z) \\
&=& \bigl\{P_{n,n}(z)-qz^{q-1}\bigr\}f^{(n+1)}(z) + \\
& & +\sum_{k=1}^n \bigl\{ P_{n,k}'(z)f^{(k)}(z)+qz^{q-1}P_{n,k}(z)+P_{n,k-1}(z)\bigr\}f^{(k)}(z)+\\
& & +\bigl\{P_{n,0}'(z)+qz^{q-1}P_{n,0}(z)\bigr\}f(z),
\end{eqnarray*}
which is the one to be proved.

Second, let us calculate the $q$-th order derivative of the product $f'(z)e^{-z^q}=qz^{q-1}$.
The Leibniz rule gives
$$
\sum_{\ell=0}^q \binom{q}{\ell} f^{(\ell+1)}(z)(e^{-z^q})^{(q-\ell)}\equiv 0.
$$
Denoting $Q_{\ell+1}(z)=-\binom{q}{\ell}(e^{-z^q})^{(q-\ell)}e^{z^q}$ for $0\leq \ell \leq q-1$, we have
$$
f^{(q+1)}(z)=\sum_{\ell=0}^{q-1}Q_{\ell+1}(z)f^{(\ell+1)}(z)=\sum_{\ell=1}^qQ_{\ell}(z)f^{(\ell)}(z),
$$
as desired.
\end{proof}

\begin{example}
We may apply the two identities in Lemma~\ref{Tohge-lemma} to construct differential equations of arbitrary order.
Given any entire function $H$, we have the identity
$$
f^{(q+1)}(z)-\sum_{\ell=1}^{q} Q_{\ell}(z)f^{(\ell)}(z)=H(z)\left( (1+e^{-z^q})f^{(j)}(z)-\sum_{k=0}^{j-1} P_{j-1,k}(z)f^{(k)}(z) \right),
$$
that is, $f(z)=e^{z^q}+1$ solves
\begin{eqnarray*}
f^{(q+1)}(z)
&-&\sum_{\ell=j+1}^{q} Q_{\ell}(z)f^{(\ell)}(z)- \left( (1+e^{-z^q})H(z) +Q_j(z) \right) f^{(j)}(z)\\
&+&\sum_{\ell=1}^{j-1} \bigl(P_{j-1,\ell}(z) H(z)-Q_{\ell}(z)\bigr)f^{(\ell)}(z) +P_{j-1,0}(z)H(z)f(z)=0,
\end{eqnarray*}
where $1\leq j \leq q$, the sum $\sum_{\ell=1}^{j-1}$ is empty if $j=1$ and the sum $\sum_{\ell=j+1}^{q}$ is empty if $j=q$.
\end{example}


\section{Multiple duality}\label{multiple-duality}

The possibility that a solution $f$ would be dual to more than one
coefficient has not been studied rigorously. In this case there would
be at least two equally strong dominant coefficients, or, in the case of \eqref{ldeAB}, 
both coefficients $A(z),B(z)$ would be equally strong. For example, $f(z)=e^{-z}$ solves
	$$
	f''+e^zf'+(e^z-1)f=0
	$$
and is dual to both coefficients. Obviously the coefficients are not
dual to each other. More examples can be produced from Example~\ref{Ozawa-ex}.
Note that $f(z)=e^z$ solves \eqref{ldeAB} if $A(z)=-B(z)-1$. Hence $f$
is not necessarily dual with either of $A(z),B(z)$.

If $H$ is any entire function, then $f(z)=e^{z^q}$ solves
	$$
	f''+\left(H(z)-qz^{q-1}\right)f'-\left(q(q-1)z^{q-2}+qz^{q-1}H(z)\right)f=0.
	$$
This example is from \cite{Gary2}. Note that  $f(z)=e^{z^q}$ satisfies both $f'(z)-qz^{q-1}f(z)=0$ and $f''(z)-qz^{q-1}f'(z)-q(q-1)z^{q-2}f(z)=0$.

Recall \cite[Theorem~2.1]{GOP}, according to
which there cannot be even one ray on which $B(z)$ would be stronger
than $A(z)$ in the sense of the Phragm\'en-Lindel\"of indicator, for otherwise all solutions of 
\eqref{ldeAB} are of infinite order.
This happens, for example, when $A(z)$ and $B(z)$ are dual to each other.

\begin{example}
One may observe the necessity of the assumption on the {\it duality} of $f$ and $A(z)$ as well as that on the {\it dominance} of $A(z)$ over $B(z)$ by the following example: The function $f(z)=\bigl(e^z+e^{-z}\bigr)e^{z^q}$, $q\in\mathbb{N}$, satisfies
	\begin{eqnarray*}
	f''&+&\bigl\{H(z)e^z+H(z)e^{-z}-2qz^{q-1}\bigr\}f' \\
	&-&\bigl\{(qz^{q-1}+1)H(z)e^z+(qz^{q-1}-1)H(z)e^{-z}-q^2z^{2(q-1)}+q(q-1)z^{q-2}+1 \bigr\}f=0
	\end{eqnarray*}
for any entire function $H$.
When $q=1$, this becomes
$$
f''+\bigl\{H(z)e^z+H(z)e^{-z}-2\bigr\}f'
-2H(z)e^zf=0
$$
with $f(z)=e^{2z}+1$.
Thus we may use it in order to observe the duality of $A(z)$ and $B(z)$ by several choices of $H$ such as $H(z)=e^{nz}$ for $n\in\mathbb{Z}$ or $H(z)=e^{iz}$.

Here we note that $f(z)=F(z)e^{z^q}$ satisfies $\dfrac{f'}{f}=\dfrac{F'}{F}+qz^{q-1}$ and
$$
\frac{f''}{f}=\frac{F''}{F}+2qz^{q-1}\frac{f'}{f}+\bigl(q(q-1)z^{q-2}-q^2z^{2(q-1)}\bigr)
$$
so that there is no large freedom to choose the function $F$.
For example, taking an Airy function as $F$, we cannot have our desired equation $f''+A(z)f'+B(z)f=0$ with the exponential polynomial coefficients $A(z)$ and $B(z)$.
\end{example}

\noindent
\textbf{Acknowledgements.} 
Ishizaki was supported by JSPS KAKENHI Grant Number 20K03658.
Wen was supported by the National Natural Science Foundation of China (No.~11971288 and No.~11771090) and Shantou University SRFT (NTF18029).

\medskip
\noindent
\emph{J.~Heittokangas}\\
\textsc{University of Eastern Finland, Department of Physics and Mathematics,
P.O.~Box 111, 80101 Joensuu, Finland}\\
\texttt{email:janne.heittokangas@uef.fi}

\medskip
\noindent
\emph{K.~Ishizaki}\\
\textsc{The Open University of Japan, Faculty of Liberal Arts, Mihama-ku, Chiba, Japan}\\
\texttt{email:ishizaki@ouj.ac.jp}

\medskip
\noindent
\emph{K.~Tohge}\\
\textsc{Kanazawa University, College of Science and Engineering, Kakuma-machi, Kanazawa 920-1192, Japan}\\
\texttt{email:tohge@se.kanazawa-u.ac.jp}

\medskip
\noindent
\emph{Z.-T.~Wen}\\
\textsc{Shantou University, Department of Mathematics, Daxue Road No. 243, Shantou 515063, China}\\
\texttt{e-mail:zhtwen@stu.edu.cn}

\end{document}